\definecolor{brightmaroon}{rgb}{0.76, 0.13, 0.28}
\newcommand{\inv}{^{\raisebox{.2ex}{$\scriptscriptstyle-1$}}}   
\newtheorem{theorem}{Theorem}[section] 
\newtheorem{proposition}[theorem]{Proposition}
\newtheorem{lemma}[theorem]{Lemma}
\newtheorem{corollary}[theorem]{Corollary}
\theoremstyle{definition}
\newtheorem{example}[theorem]{Example} 
\newtheorem{remark}[theorem]{Remark}
\title{$k$-spaces of non-domain-valued geometric points} 
\author{Amartya Goswami}
\address{[1] Department of Mathematics and Applied Mathematics, University of Johannesburg, P.O. Box 524, Auckland Park, 2006, South Africa\\ 
[2] National Institute for Theoretical and Computational Sciences (NITheCS), South Africa} 
\email{agoswami@uj.ac.za}
\begin{abstract}
The aim of this paper is to study the topological properties of algebraic sets with zero divisors. We impose a subbasic topology on the set of proper ideals of a $k$-algebra and this new ``$k$-space'' becomes a generalization of the corresponding Zariski space. We prove that a $k$-space is $T_{\scriptscriptstyle 0}$, quasi-compact, spectral, and connected. Moreover, we study continuous maps between such $k$-spaces. We conclude with a question about construction of a sheaf of $k$-spaces similar to affine schemes.
\end{abstract}  
\subjclass{14A20; 14A99; 16D25}
\keywords{geometric point; connectedness; spectral space} 
\begin{document}   
\maketitle 

\section{Introduction}

In the introduction of \cite{G71}, \textsc{Grothendieck} described the process of getting the spectrum of prime ideals (also called geometric points) starting from a system of polynomial equations. In brief, it is as follows.

Suppose $k$ is a commutative ring with identity. Let $P_I=k[(x_i)_{i\in I}]$ be a ring of polynomials in the indeterminates $x_i$ with coefficients in $k,$ and  $I$ be an index set (not necessarily finite). 
Let $S=\{p_j\}_{j\in J}$ be a system of polynomials of $P_I,$ where the index set $J$ is also not necessarily finite. An element $a=(a_i)_{i\in I}$ of a $k$-algebra $A$ is called a \emph{solution} of the system $S$ if $p_j(a)=0$ for all $j \in J.$

If $\mathds{A}\mathrm{lg}_k$ and $\mathds{S}\mathrm{ets}$ respectively denote the categories of $k$-algebras and sets, then a functor $$\mathcal{V}_S\colon \mathds{A}\mathrm{lg}_k \to \mathds{S}\mathrm{ets}$$ represents the solutions of some system $S$ of polynomial equations with coefficients in $k$ if and only if $\mathcal{V}_S$ is representable, \textit{i.e.}, $\mathcal{V}_S$ is isomorphic to $\mathcal{R}_A$ for some object $A$ in  $\mathds{A}\mathrm{lg}_k.$ Conversely, for every object $A$ in $\mathds{A}\mathrm{lg}_k$  the representable functor $\mathcal{R}_A$ is isomorphic to some $\mathcal{V}_S.$
The functor $\mathcal{R}_A$ is called the \emph{affine algebraic space over} $k$ represented by $A.$ The category $\mathds{A}\mathrm{ff}_k$ of affine algebraic spaces has representable functors $\mathcal{R}_A$ ($A$ is an object in $\mathds{A}\mathrm{lg}_k$) as objects and morphisms are defined as natural transformations, \textit{i.e.}, they are the induced maps $$\mu(f)\colon \mathrm{Hom}_{\mathds{A}\mathrm{lg}_k} (A, B) \to \mathrm{Hom}_{\mathds{A}\mathrm{lg}_k} (A', B)$$ obtained from morphisms $f\colon A'\to A$ in $\mathds{A}\mathrm{lg}_k.$
An $A'$-\emph{valued point} is a $k$-algebra homomorphism $f\colon A\to A',$ \textit{i.e.}, $f$  is an element of the set $\mathrm{Hom}_{\mathds{A}\mathrm{lg}_k} (A, A').$ If we restrict $A'$ to be an object of the full subcategory $\mathds{F}\mathrm{ield}_k$ of $\mathds{A}\mathrm{lg}_k$ then the elements of $\mathrm{Hom}_{\mathds{A}\mathrm{lg}_k} (A, A')$ are called \emph{geometric points}.

We define an equivalence relation between geometric points as follows.
We say two geometric points $f'\colon A\to A'$ and $f''\colon A\to A''$ are equivalent if there exists a third geometric point $f\colon A\to A_1$ and $k$-algebra morphisms $g'\colon A'\to A_1$ and $g''\colon A''\to A_1$ such that 
 \begin{equation}
f=g''\circ f''=g'\circ f'
\label{com1}
\end{equation}	\textit{i.e.}, 
the following diagram commutes:
\[
\xymatrix{ A\ar[r]^{f'}\ar[d]_{f''}\ar[dr]^{f} & A'\ar[d]^{g'}\\
A'' \ar[r]^{g''}& A_1. 
}
\]

Since $g'$ and $g''$ are monomorphisms, we observe that the  condition (\ref{com1}) is equivalent to $\mathrm{ker}f'=\mathrm{ker}f''.$ Therefore, the equivalence classes of the above relation are in bijection with the prime ideals of $A.$
Now there is a bijection between geometric points and prime ideals of a $k$-algebra $A.$
The \emph{loci} of a $k$-algebra $A$ is the equivalence classes of geometric points.

The \emph{spectrum} of $A$ (denoted by $\mathrm{Spec}\,A$)  is defined as the set of prime ideals of $A,$ \textit{i.e.}, 
$X=\mathrm{Spec}\,A= \{\mathfrak{p}\mid \mathfrak{p}\;\text{is a prime ideal of}\;A \}.$
For $S=\{p_j\}_{j\in J}$ be a system of polynomials of $P_I,$ let $\mathcal{V}(S)$ be the subset of $\mathrm{Spec}\,A$  defined by
$$\mathcal{V}(S)=\{\text{set of loci of}\;u\in \mathcal{R}_A(A')\mid f_i(u)=0, \forall_{i\in I}\, f_i\in S \},$$
where $f_i(u)$ is defined by $f_i(u)=u(f_i).$
From the above definition of $\mathcal{V}(S),$ we immediately see
$$\mathcal{V}(S) = \{\mathfrak{p}\in X \mid S\subseteq \mathfrak{p}\}.$$
From the above, we observe that in order to obtain the $\mathrm{Spec}\,A$, we worked with the full subcategory $\mathds{F}\text{ield}_k$ of $\mathds{A}\text{lg}_k$. 

If we remove this `restriction' on the $k$-algebra, what we obtain is a spectrum $\mathrm{Idl}\,A$ of  all ideals (instead of prime ideals) of $A$. This will allow us to study polynomial equations having solutions in any $k$-algebra (not necessarily a field). Since $A\notin \mathrm{Spec}\,A,$ we also consider the set $\mathrm{Spi}\,A$ of all proper ideals of $A$ as our `generalized' spectrum on which we endow a topology and call it a $k$-space. Our choice of the notation $\mathrm{Spi}\,A$ is to have an `alignment' with the notation $\mathrm{Spm}\,A$ of maximal ideals of $A$ as in \textsc{Grothendieck} \cite{G71}. A $k$-space is a generalization of a Zariski space (\textit{i.e.}, $\mathrm{Spec}\,A$ endowed with a Zariski topology). The purpose of this paper is to study topological properties of $k$-spaces and simultaneously compare them with Zariski spaces.

\section{$k$-spaces}

To construct a $k$-space, we use two maps defined in Proposition \ref{kalg}. Similar maps also appear when we take values of polynomials over an integral domain (to impose a Zariski topology on a $\mathrm{Spec}\,A$).  Before we discuss properties of these maps, let us see an example in our context.

\begin{example}
We consider a $k$-algebra with zero divisors and its algebraic sets. The polynomials listed in the Table\,\ref{tabA} are of minimal degrees.
	
\begin{table}[H]
\begin{center}
\begin{tabular}{|c|c|c|c|}
\hline
\footnotesize\textbf{Subsets of} $\mathbb{Z}_4$ & \footnotesize\textbf{Polynomials}& \footnotesize\textbf{Algebraic sets of} $\mathbb{Z}_4$&\footnotesize\textbf{Polynomials}\\
\hline
\footnotesize				$\emptyset$ &\footnotesize$\mathbb{Z}_4\!\!\setminus\!\! \{0\}$ &\footnotesize$\emptyset$ & \footnotesize$1$\\
\hline 
\footnotesize$\{0\}$&\footnotesize$\{ax \}$ &\footnotesize$\{0\}$ &\footnotesize$x$\\
\hline
\footnotesize$\{1\}$&\footnotesize$\{ax+b\mid a+b=0  \}$&\footnotesize$\{1\}$ &\footnotesize$x+3$\\
\hline 
\footnotesize$\{2\}$&\footnotesize$\{ax+b\mid 2a+b=0  \}   $&\footnotesize$\{2\}$ &\footnotesize$(x+2)$\\
\hline 
\footnotesize$\{3\}$&\footnotesize$\{ax+b\mid 3a+b=0\}$ &\footnotesize$\{3\}$ &\footnotesize$x+1$\\
\hline 
\footnotesize$\{0, 1\}$&\footnotesize$\{ax^2+bx+c\mid a+b=0, c=0\}$&\footnotesize$\{0,1\}$ &\footnotesize$x(x+3)$ \\
\hline
\footnotesize$\{0, 2\}$&\footnotesize$\{ax^2+bx+c\mid 2b=0, c=0\}$ &\footnotesize$\{0,2\}$ &\footnotesize$x(x+2)$\\
\hline
\footnotesize$\{0, 3\}$& \footnotesize$\{ax^2+bx+c\mid a+3b=0, c=0\}$ &\footnotesize$\{0,3\}$ &\footnotesize$x(x+1)$\\
\hline
\footnotesize$\{1, 2\}$&\footnotesize$\{ax^2+bx+c\mid a-b=0, 2a+c=0\}$&\footnotesize$\{1,2\}$ &\footnotesize$(x+3)(x+2)$\\
\hline 
\footnotesize$\{1, 3\}$&\footnotesize$\{ax^2+bx+c\mid 2b=0, 2a+2c=0\}$ &\footnotesize$\{1,3\}$ &\footnotesize$(x+3)(x+1)$\\
\hline
\footnotesize$\{2, 3\}$&\footnotesize$\{ax^2+bx+c\mid a+b=0, 2b+c=0\}$ &\footnotesize$\{2,3\}$ &\footnotesize$(x+2)(x+1)$\\
\hline
\footnotesize$\{0, 1, 2\}$&\footnotesize$\{ax^2+bx+c\mid a+b=0, 2b=0\}$ &\footnotesize$\{0,1,2\}$ &\footnotesize$x(x+3)(x+2)$\\\hline
\footnotesize$\{0, 1, 3\}$ &\footnotesize$\{ax^2+bx+c\mid a+b=0, 2b=0\}$ &\footnotesize$\{0,1,3\}$ &\footnotesize$x(x+3)(x+1)$\\\hline
\footnotesize$\{0, 2, 3 \}$&\footnotesize$\{ax^2+bx+c\mid a+b=0, 2b=0\}$ &\footnotesize $\{0,2,3\}$&\footnotesize$x(x+2)(x+1)$
\\\hline
\footnotesize$\{1,2,3\}$&\footnotesize$\{ax^2+bx+c\mid a+b=0, 2b=0\}$ &\footnotesize$\{1,2,3\}$ &\footnotesize$(x+3)(x+2)(x+1)$
\\\hline  
\footnotesize$\mathbb{Z}_4$& \footnotesize$\{ax+b\mid a=0, b=0\}$ &\footnotesize$\mathbb{Z}_4$ &\footnotesize$x(x+3)(x+2)(x+1)$
\\\hline
\end{tabular}
\end{center}
\caption{Algebraic sets of $\mathbb{Z}_4$}
\label{tabA}
\end{table}
\end{example}

\begin{proposition}\label{kalg}
Define the maps
$\def\objectstyle{\scriptstyle}
\def\labelstyle{\scriptstyle}
\xymatrix@C=1cm{
\mathrm{Spi}\,A\ar@<.4ex>[r]^{\mathcal{I}}
&\mathfrak{P}(A)\ar@<.4ex>[l]^{\mathcal{V}}
}$
as follows: $$\mathcal{V}(\{x\})=\{\mathfrak{a}\in \mathrm{Spi}\,A\mid x\in\mathfrak{a}\},\quad \mathcal{I}(S)=\cap\{\mathfrak{s}\mid\mathfrak{s}\in S\},$$ with $S\subseteq \mathrm{Spi}\,A$. Here $\mathfrak{P}(A)$ denotes the power set of $A$. Then $\mathcal{V}$ and $\mathcal{I}$ satisfy the following properties.
	
\begin{enumerate}
\item $\mathcal{V}(S)=\mathcal{V}(\left<S\right>),$
where $\left<S\right>$ is the ideal of $A$ generated by the subset $S.$  

\item The map $\mathcal{V}$ is order reversing and surjective.
		
\item \label{vavra} If  $\mathfrak{a}$ is a non-radical ideal of $A$,  then $\mathcal{V}(\mathfrak{a})\subsetneq \mathcal{V}(\sqrt{\mathfrak{a}})$ if and only if $A$ has non-zero zero divisors.
		
\item\label{ncl} For any two ideals $\mathfrak{a},$ $\mathfrak{b}$ of $A$, we have $$\mathcal{V}(\mathfrak{a})\cup \mathcal{V}(\mathfrak{b}) \subseteq \mathcal{V}(\mathfrak{a}\cap \mathfrak{b})\subseteq \mathcal{V}(\mathfrak{ab}). $$

\item For a family of sets $\{ \mathcal{V}(\mathfrak{a}_{\alpha}) \}_{\alpha \in \Gamma},$ we have $\bigcap_{\alpha \in \Gamma}\mathcal{V}(\mathfrak{a}_{\alpha})=\mathcal{V}\left( \sum_{\alpha \in \Gamma}\mathfrak{a}_{\alpha} \right).$

\item\label{vaa0} $\mathcal{V}(\mathfrak{a})=\mathrm{Spi}\,A$ if and only if $\mathfrak{a}=\mathfrak{o},$ where $\mathfrak{o}$ is the zero ideal of $A.$  If $\mathcal{V}(\mathfrak{a})=\emptyset$, then $\mathfrak{a}=A.$

\item For any two ideals $\mathfrak{a},$ $\mathfrak{b}$ of $A$ and $\mathfrak{b}\subseteq \sqrt{\mathfrak{a}}$ implies $\mathcal{V}(\sqrt{\mathfrak{a}})\subseteq \mathcal{V}(\mathfrak{b}).$ 

\item The map $\mathcal{I}$ is order reversing and surjective.

\item $\mathcal{I}(\emptyset)=A$ and  $\mathcal{I}\left( \bigcup_{\lambda \in \Lambda}T_{\lambda} \right)=\bigcap_{\lambda \in \Lambda} \mathcal{I}\left(T_{\lambda}\right). $ 

\item\label{ivvi} If $T$ is a subset of $\mathrm{Spi}\,A$ and  $\mathfrak{a}$ is an ideal of $A,$ then $\mathcal{IV}(\mathfrak{a})\supseteq  \mathfrak{a},$ and $\mathcal{VI}(T)=T.$ 

\item\label{chchk} the collections $\mathcal{C}_{\mathcal{V}}=\{ \mathcal{V}(\mathfrak{a})\mid \mathfrak{a}\in \mathrm{Idl}(A)\}$ and  $\mathcal{C}_{\mathcal{VI}}=\{ \mathcal{VI}(S)\mid S\in \mathcal{P}(\mathrm{Spi}\,A)\}$ of sets are identical, where $\mathrm{Idl}\,A$ denotes the poset (under inclusion) of all ideals of $A$.   
\end{enumerate}
\end{proposition}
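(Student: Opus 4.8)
The plan is to deduce essentially the whole proposition from one structural fact: $(\mathcal{V},\mathcal{I})$ is an antitone Galois connection. First I would extend $\mathcal{V}$ to arbitrary subsets by $\mathcal{V}(S)=\{\mathfrak{a}\in\mathrm{Spi}\,A\mid S\subseteq\mathfrak{a}\}=\bigcap_{x\in S}\mathcal{V}(\{x\})$, read $\mathcal{I}$ as a map on subsets of $\mathrm{Spi}\,A$, and record the adjunction
\[
T\subseteq\mathcal{V}(S)\iff S\subseteq\mathfrak{s}\ \text{for all}\ \mathfrak{s}\in T\iff S\subseteq\mathcal{I}(T)\qquad(S\subseteq A,\ T\subseteq\mathrm{Spi}\,A),
\]
which is immediate from the definitions. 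Clause (1) is the separate elementary remark that $S\subseteq\mathfrak{a}\iff\langle S\rangle\subseteq\mathfrak{a}$ when $\mathfrak{a}$ is an ideal.

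From the Galois connection the formal clauses come out by the usual yoga. Order-reversal of $\mathcal{V}$ and $\mathcal{I}$ gives the first halves of (2) and (8), all of (7) (order-reversal applied to $\mathfrak{b}\subseteq\sqrt{\mathfrak{a}}$), and all of (4) (order-reversal along $\mathfrak{ab}\subseteq\mathfrak{a}\cap\mathfrak{b}\subseteq\mathfrak{a},\mathfrak{b}$). Each of $\mathcal{V},\mathcal{I}$ carries joins to meets: for $\mathcal{I}$ this is $\mathcal{I}(\bigcup_\lambda T_\lambda)=\bigcap_\lambda\mathcal{I}(T_\lambda)$, and, with the empty union, $\mathcal{I}(\emptyset)=A$ — that is (9); for $\mathcal{V}$, combined with (1) and $\langle\bigcup_\alpha\mathfrak{a}_\alpha\rangle=\sum_\alpha\mathfrak{a}_\alpha$, it is $\bigcap_\alpha\mathcal{V}(\mathfrak{a}_\alpha)=\mathcal{V}(\sum_\alpha\mathfrak{a}_\alpha)$ — that is (5). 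The composites $\mathcal{IV}$ and $\mathcal{VI}$ are closure operators, yielding the inclusions of (10), and the triangle identities $\mathcal{VIV}=\mathcal{V}$, $\mathcal{IVI}=\mathcal{I}$ hold.

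For the clauses that are not purely formal I would add one elementary observation: a proper ideal $\mathfrak{a}$ is itself a proper ideal containing $\mathfrak{a}$, so $\mathfrak{a}\in\mathcal{V}(\mathfrak{a})$, whereas $\mathcal{V}(A)=\emptyset$. Intersecting $\mathcal{V}(\mathfrak{a})$ against the member $\mathfrak{a}$ shows $\mathcal{IV}(\mathfrak{a})\subseteq\mathfrak{a}$, hence with (10) $\mathcal{IV}(\mathfrak{a})=\mathfrak{a}$ for proper $\mathfrak{a}$ (and $\mathcal{IV}(A)=\mathcal{I}(\emptyset)=A$); thus $\mathcal{I}$ hits every ideal, finishing (8), and the surjectivity of $\mathcal{V}$ in (2) is built into the definition of $\mathcal{C}_{\mathcal{V}}$ via (1). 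The same observation gives (6): if $\mathcal{V}(\mathfrak{a})=\mathrm{Spi}\,A$ then $\mathfrak{a}$ lies in every proper ideal, in particular in $\mathfrak{o}$, so $\mathfrak{a}=\mathfrak{o}$, while $\mathcal{V}(\mathfrak{o})=\mathrm{Spi}\,A$ is clear; and $\mathcal{V}(\mathfrak{a})=\emptyset$ forces $\mathfrak{a}\notin\mathcal{V}(\mathfrak{a})$, hence $\mathfrak{a}=A$. Clause (11) then follows from the triangle identity: for $\mathfrak{a}\in\mathrm{Idl}\,A$, $\mathcal{V}(\mathfrak{a})=\mathcal{VIV}(\mathfrak{a})=\mathcal{VI}(\mathcal{V}(\mathfrak{a}))\in\mathcal{C}_{\mathcal{VI}}$, so $\mathcal{C}_{\mathcal{V}}\subseteq\mathcal{C}_{\mathcal{VI}}$; conversely, for $S\subseteq\mathrm{Spi}\,A$ the set $\mathcal{I}(S)$ is an intersection of ideals, hence an ideal, so $\mathcal{VI}(S)=\mathcal{V}(\mathcal{I}(S))\in\mathcal{C}_{\mathcal{V}}$, and $\mathcal{C}_{\mathcal{VI}}\subseteq\mathcal{C}_{\mathcal{V}}$.

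The one genuinely ring-theoretic clause, and the step I expect to be the main obstacle, is (3). One inclusion is free: since $\mathfrak{a}\subseteq\sqrt{\mathfrak{a}}$, order-reversal gives $\mathcal{V}(\sqrt{\mathfrak{a}})\subseteq\mathcal{V}(\mathfrak{a})$, so the content is the strictness and its advertised equivalence with the presence of a nonzero zero divisor. Here I would pass to $A/\mathfrak{a}$, where $\sqrt{\mathfrak{a}}/\mathfrak{a}$ is the nilradical, nonzero by the non-radicality hypothesis, and try to produce a proper ideal containing $\mathfrak{a}$ but not all of $\sqrt{\mathfrak{a}}$, which separates $\mathcal{V}(\mathfrak{a})$ from $\mathcal{V}(\sqrt{\mathfrak{a}})$; for the reverse implication I would check that absent nontrivial zero divisors no such separation can occur. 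The remaining effort is confined to this clause together with the clerical task of keeping the two order-reversing directions — between $\mathrm{Idl}\,A$ and the power set of $\mathrm{Spi}\,A$ — consistent, and of matching the quotient argument in (3) to the exact biconditional stated rather than a near variant.
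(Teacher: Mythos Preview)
The paper states this proposition without proof, treating the clauses as routine, so there is no argument to compare against. Your Galois-connection framing is the natural organization and correctly delivers (1), (4), (5), (7), (9), the order-reversal halves of (2) and (8), the inclusion $\mathcal{IV}(\mathfrak{a})\supseteq\mathfrak{a}$ in (10), and---via the observation $\mathfrak{a}\in\mathcal{V}(\mathfrak{a})$ for proper $\mathfrak{a}$---the equality $\mathcal{IV}(\mathfrak{a})=\mathfrak{a}$, hence surjectivity of $\mathcal{I}$ in (8), together with (6) and (11).

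Two points deserve flagging, and both are defects in the \emph{statement} rather than in your method. First, you write that the closure operators ``yield the inclusions of (10)'', but (10) actually asserts the \emph{equality} $\mathcal{VI}(T)=T$, and this is false in general: with $A=\mathbb{Z}$ and $T=\{2\mathbb{Z},3\mathbb{Z}\}$ one has $\mathcal{I}(T)=6\mathbb{Z}$ and $\mathcal{VI}(T)=\{6\mathbb{Z},3\mathbb{Z},2\mathbb{Z}\}\supsetneq T$. The Galois connection gives only $T\subseteq\mathcal{VI}(T)$, and nothing stronger is true; this also undermines any reading of ``surjective'' in (2) as surjectivity onto all of $\mathfrak{P}(\mathrm{Spi}\,A)$. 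Second, clause (3) is problematic as written: since $\mathfrak{a}\subseteq\sqrt{\mathfrak{a}}$ one always has $\mathcal{V}(\sqrt{\mathfrak{a}})\subseteq\mathcal{V}(\mathfrak{a})$ (consistent with the paper's own summary table), so $\mathcal{V}(\mathfrak{a})\subsetneq\mathcal{V}(\sqrt{\mathfrak{a}})$ can never hold; and with the inclusion reversed, the witness $\mathfrak{a}\in\mathcal{V}(\mathfrak{a})\setminus\mathcal{V}(\sqrt{\mathfrak{a}})$ shows strictness for \emph{every} non-radical proper ideal regardless of zero divisors (e.g.\ $\mathfrak{a}=4\mathbb{Z}$ in the domain $\mathbb{Z}$). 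Your hesitation over (3) is therefore well placed: no argument will establish the biconditional exactly as stated.
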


\begin{remark}
Notice that for $\mathrm{Spec}\,A$ and for any ideal $\mathfrak{a}$ of $A$, we always have equalities in (\ref{vavra}) and (\ref{ncl}).  Note that for $\mathrm{Spec}\,A$, we always have: $\mathcal{IV}(\mathfrak{a})=\sqrt{\mathfrak{a}},$ the radical of $\mathfrak{a}$ (\textit{cf.} Proposition \ref{kalg} (\ref{ivvi}))
\end{remark}

\subsection{$k$-topologies}
In case of $\mathrm{Spec}\,A$, the sets $\{\mathcal{V}(\mathfrak{a})\}_{\mathfrak{a}\in \mathrm{Idl}\,A}$ are closed under finite unions and we obtain the usual Zariski topology on $\mathrm{Spec}\,A$. But that closure property fails to hold for $\mathrm{Spi}\,A$ (see Theorem \ref{kalg}(\ref{ncl})).
However, as a sub-base,  $\mathcal{C}_{\mathcal{V}}$ or equivalently by $\mathcal{C}_{\mathcal{VI}}$ (see Proposition \ref{kalg} (\ref{chchk})) induces a unique  topology on  $\mathrm{Spi}\,A$, which 
we call   the \emph{$k$-topology}. We denote the corresponding  topological space by $(\mathrm{Spi}\,A, \mathcal{C}_{\mathcal{V}}),$ and in short, call it a \emph{$k$-space}. With the abuse of notation we shall also denote the space by $\mathrm{Spi}\,A.$ A $k$-topology coincides with the Zariski topology whenever we restrict $\mathrm{Spi}\,A$ to $\mathrm{Spec}\,A$. Note that a study of a similar topology on various classes of ideals of a ring has been done in \textsc{Dube} \& \textsc{Goswami} \cite{DG22}.

It is well-known that a \emph{Zariski space} is quasi-compact. The same holds for a $k$-space. In the proof we shall use the
Alexander Subbase Theorem.

\begin{proposition}\label{comp} 
A $k$-space is quasi-compact. 
\end{proposition}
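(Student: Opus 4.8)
The plan is to apply the Alexander Subbase Theorem. Since $\mathcal{C}_{\mathcal{V}}$ is, by construction, a subbase of \emph{closed} sets for the $k$-topology, the family $\mathcal{D}=\{D(\mathfrak{a}):=\mathrm{Spi}\,A\setminus\mathcal{V}(\mathfrak{a})\mid \mathfrak{a}\in\mathrm{Idl}\,A\}$ is a subbase of \emph{open} sets; so it suffices to prove that every cover of $\mathrm{Spi}\,A$ by members of $\mathcal{D}$ admits a finite subcover, and quasi-compactness follows.

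First I would fix a family $\{\mathfrak{a}_\alpha\}_{\alpha\in\Gamma}$ of ideals of $A$ with $\bigcup_{\alpha\in\Gamma}D(\mathfrak{a}_\alpha)=\mathrm{Spi}\,A$. Passing to complements, this says $\bigcap_{\alpha\in\Gamma}\mathcal{V}(\mathfrak{a}_\alpha)=\emptyset$. By part $(5)$ of Proposition~\ref{kalg} the left-hand side equals $\mathcal{V}\bigl(\sum_{\alpha\in\Gamma}\mathfrak{a}_\alpha\bigr)$, so $\mathcal{V}\bigl(\sum_{\alpha\in\Gamma}\mathfrak{a}_\alpha\bigr)=\emptyset$, and then Proposition~\ref{kalg}\,(\ref{vaa0}) forces $\sum_{\alpha\in\Gamma}\mathfrak{a}_\alpha=A$.

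The decisive step is the classical translation of this into a finiteness statement: since $1\in A=\sum_{\alpha\in\Gamma}\mathfrak{a}_\alpha$, there are finitely many indices $\alpha_1,\dots,\alpha_n$ with $1\in\mathfrak{a}_{\alpha_1}+\cdots+\mathfrak{a}_{\alpha_n}$, whence $\sum_{i=1}^n\mathfrak{a}_{\alpha_i}=A$. Applying part $(5)$ of Proposition~\ref{kalg} once more gives $\bigcap_{i=1}^n\mathcal{V}(\mathfrak{a}_{\alpha_i})=\mathcal{V}(A)=\emptyset$, the last equality being immediate from the definition of $\mathcal{V}$ since no proper ideal of $A$ contains $A$. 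Taking complements again, $\bigcup_{i=1}^n D(\mathfrak{a}_{\alpha_i})=\mathrm{Spi}\,A$, which is the required finite subcover; the Alexander Subbase Theorem then completes the argument.

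I do not anticipate a serious obstacle. The two points that merit care are: $(\mathrm{i})$ Alexander's theorem must be applied to the subbase of \emph{open} sets $D(\mathfrak{a})$, not to the closed sets $\mathcal{V}(\mathfrak{a})$ themselves; and $(\mathrm{ii})$ part $(5)$ of Proposition~\ref{kalg}, stated for arbitrary families, is invoked twice --- first to collapse the (possibly infinite) intersection into a single $\mathcal{V}$ of a sum of ideals, and again to recombine the finitely many pieces extracted from the relation $\sum_{i=1}^n\mathfrak{a}_{\alpha_i}=A$. The degenerate case $A=0$, where $\mathrm{Spi}\,A=\emptyset$, is trivially quasi-compact and may be dismissed in one line.
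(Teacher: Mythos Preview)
Your proposal is correct and mirrors the paper's own proof almost step for step: both reduce to Alexander's subbase theorem, use Proposition~\ref{kalg}(5) to rewrite $\bigcap_\alpha\mathcal{V}(\mathfrak{a}_\alpha)$ as $\mathcal{V}\bigl(\sum_\alpha\mathfrak{a}_\alpha\bigr)$, invoke Proposition~\ref{kalg}(\ref{vaa0}) to conclude the sum is $A$, and then extract a finite subfamily from an expression of $1$ as a finite sum. The only cosmetic difference is that the paper works directly with the finite-intersection-property formulation for the closed subbase $\{\mathcal{V}(\mathfrak{a})\}$ rather than passing to the complementary open sets $D(\mathfrak{a})$.
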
 

\begin{proof}   
Let  $\{K_{ \alpha}\}_{\alpha \in \Lambda}$ be a family of subbasic closed sets of an $k$-space $\mathrm{Spi}\,A$   such that $\bigcap_{\alpha\in \Lambda}K_{ \alpha}=\emptyset.$ Let $\{\mathfrak{s}_{ \alpha}\}_{\alpha \in \Lambda}$ be a family of ideals of $A$ such  that $\forall \alpha \in \Lambda,$  $K_{ \alpha}=\mathcal{V}(\mathfrak{s}_{ \alpha}).$  Since $$\bigcap_{\alpha \in \Lambda}\mathcal{V}(\mathfrak{s}_{ \alpha})=\mathcal{V}\left(\sum_{\alpha \in \Lambda}\mathfrak{s}_{ \alpha}\right),$$ we get  $\mathcal{V}\left(\sum_{\alpha \in \Lambda}\mathfrak{s}_{ \alpha}\right)=\emptyset,$ and that by Proposition \ref{kalg} (\ref{vaa0})  implies $ \sum_{\alpha \in \Lambda}\mathfrak{s}_{ \alpha}=A.$ Then, in particular, we obtain $1=\sum_{\alpha_i\in \Lambda}s_{ \alpha_i},$ where $s_{ \alpha_i}\in \mathfrak{s}_{\alpha_i}$ and $s_{ \alpha_i}\neq 0$ for $i=1, \ldots, n$. This implies    $A=\sum_{  i \, =1}^{ n}\mathfrak{s}_{\alpha_i}.$ Therefore,   $\bigcap_{ i\,=1}^{ n}K_{ \alpha_i}=\emptyset,$ and hence by Alexander subbase theorem, $\mathrm{Spi}\,A$ is quasi-compact.  
\end{proof}  

Since $\mathcal{V}(\mathfrak{a})\neq \mathcal{V}(\mathfrak{a}')$ for any two distinct elements $\mathfrak{a}$ and $\mathfrak{a}'$ of $\mathrm{Idl}\,A$, we immediately have

\begin{proposition}
Every $k$-space is   $T_{\scriptscriptstyle 0}.$ 
\label{ct0t1}  
\end{proposition}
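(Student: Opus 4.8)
The plan is to verify the $T_{\scriptscriptstyle 0}$ separation axiom directly from the subbasic description of the $k$-topology. Recall that $\mathcal{C}_{\mathcal{V}}$ is a subbase for the \emph{closed} sets of $\mathrm{Spi}\,A$, so the complements $\{\,\mathrm{Spi}\,A\setminus\mathcal{V}(\mathfrak{c})\mid \mathfrak{c}\in\mathrm{Idl}\,A\,\}$ form a subbase for the open sets; in particular every such complement is open. Hence it is enough to show that for any two distinct points of $\mathrm{Spi}\,A$ there is an ideal $\mathfrak{c}$ with $\mathcal{V}(\mathfrak{c})$ containing exactly one of the two points.

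So first I would take two distinct points $\mathfrak{a},\mathfrak{b}\in\mathrm{Spi}\,A$, that is, two distinct proper ideals of $A$. If we had both $\mathfrak{a}\subseteq\mathfrak{b}$ and $\mathfrak{b}\subseteq\mathfrak{a}$ we would get $\mathfrak{a}=\mathfrak{b}$, a contradiction; relabelling if necessary, I may therefore assume $\mathfrak{a}\not\subseteq\mathfrak{b}$. Now consider the subbasic closed set $\mathcal{V}(\mathfrak{a})=\{\mathfrak{c}\in\mathrm{Spi}\,A\mid \mathfrak{a}\subseteq\mathfrak{c}\}\in\mathcal{C}_{\mathcal{V}}$. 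Since $\mathfrak{a}$ is a proper ideal we have $\mathfrak{a}\in\mathcal{V}(\mathfrak{a})$, whereas $\mathfrak{b}\in\mathcal{V}(\mathfrak{a})$ would mean $\mathfrak{a}\subseteq\mathfrak{b}$, which fails; thus $\mathfrak{b}\notin\mathcal{V}(\mathfrak{a})$. Consequently the open set $U=\mathrm{Spi}\,A\setminus\mathcal{V}(\mathfrak{a})$ contains $\mathfrak{b}$ but not $\mathfrak{a}$, which is exactly the separation demanded by the $T_{\scriptscriptstyle 0}$ axiom. Equivalently, one may pick $x\in\mathfrak{a}\setminus\mathfrak{b}$ and use $\mathcal{V}(\{x\})=\mathcal{V}(\langle x\rangle)\in\mathcal{C}_{\mathcal{V}}$, which contains $\mathfrak{a}$ and excludes $\mathfrak{b}$; this is really the same argument and is the one underlying the injectivity of $\mathcal{V}$ on $\mathrm{Idl}\,A$ noted just before the statement.

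I do not expect any genuine obstacle: the result is immediate once the subbasic structure is unwound. The only two points that need a little care are (i) making sure the separating set is actually open in the $k$-topology, which is why the argument uses the complement of a member of the subbase $\mathcal{C}_{\mathcal{V}}$ rather than of an arbitrary closed set, and (ii) remembering that $T_{\scriptscriptstyle 0}$ is one-sided, so the reduction ``without loss of generality $\mathfrak{a}\not\subseteq\mathfrak{b}$'' is legitimate and no symmetric (i.e.\ $T_{\scriptscriptstyle 1}$-style) separation is required — indeed $\mathrm{Spi}\,A$ will typically fail to be $T_{\scriptscriptstyle 1}$, since a non-maximal proper ideal lies in the closure of no larger proper ideal but the zero ideal, say, lies in the closure of every point.
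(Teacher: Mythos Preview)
Your argument is correct and is exactly the paper's approach: the one-line justification preceding the proposition---that $\mathcal{V}(\mathfrak{a})\neq\mathcal{V}(\mathfrak{a}')$ for distinct $\mathfrak{a},\mathfrak{a}'$---unwinds to precisely the separation you carry out with the subbasic closed set $\mathcal{V}(\mathfrak{a})$. One small slip in your closing aside (which does not affect the proof): the specialization order goes the other way, so it is every point that lies in $\mathcal{C}\!\ell(\mathfrak{o})=\mathcal{V}(\mathfrak{o})=\mathrm{Spi}\,A$, not that $\mathfrak{o}$ lies in the closure of every point.
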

 
It is known that $\{\mathcal{V}(\mathfrak{p})\mid \mathfrak{p}\in \mathrm{Spec}\,A\}$ are exactly the irreducible closed subsets of a Zariski space. For a $k$-space, the situation is more intriguing. 

\begin{theorem}\label{irrc}
Every non-empty subbasic closed subset of a $k$-space  is irreducible.
\end{theorem}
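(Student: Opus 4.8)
The plan is to show that every non-empty subbasic closed set is the closure of a single point, and then to invoke the elementary topological fact that the closure of a point is always irreducible.

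First I would unwind the definitions. A non-empty subbasic closed subset is a member $\mathcal{V}(\mathfrak{a})$ of $\mathcal{C}_{\mathcal{V}}$ with $\mathfrak{a}\in\mathrm{Idl}\,A$. Since $\mathcal{V}(A)=\emptyset$, the hypothesis $\mathcal{V}(\mathfrak{a})\neq\emptyset$ forces $\mathfrak{a}\neq A$ (this is exactly the contrapositive of the last assertion of Proposition~\ref{kalg}\,(\ref{vaa0})); hence $\mathfrak{a}$ is a proper ideal, i.e.\ a genuine point of $\mathrm{Spi}\,A$, and obviously $\mathfrak{a}\in\mathcal{V}(\mathfrak{a})$ because $\mathfrak{a}\subseteq\mathfrak{a}$.

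Next I would identify $\mathcal{V}(\mathfrak{a})$ with the closure $\overline{\{\mathfrak{a}\}}$ taken in the $k$-topology. Because $\mathcal{C}_{\mathcal{V}}$ is a subbase for the closed sets, every closed subset of $\mathrm{Spi}\,A$ is an intersection of finite unions of sets $\mathcal{V}(\mathfrak{c})$ with $\mathfrak{c}\in\mathrm{Idl}\,A$. Now observe that the subbasic closed sets containing the point $\mathfrak{a}$ are precisely those $\mathcal{V}(\mathfrak{c})$ with $\mathfrak{c}\subseteq\mathfrak{a}$, and for any such $\mathfrak{c}$ one has $\mathcal{V}(\mathfrak{a})\subseteq\mathcal{V}(\mathfrak{c})$ since $\mathcal{V}$ is order-reversing. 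Consequently $\mathcal{V}(\mathfrak{a})$ is contained in every closed set that contains $\mathfrak{a}$: given such a closed set, written as $\bigcap_{i}\big(\mathcal{V}(\mathfrak{c}_{i,1})\cup\cdots\cup\mathcal{V}(\mathfrak{c}_{i,n_i})\big)$, membership of $\mathfrak{a}$ means that for each $i$ some term $\mathcal{V}(\mathfrak{c}_{i,j})$ contains $\mathfrak{a}$, whence $\mathcal{V}(\mathfrak{a})\subseteq\mathcal{V}(\mathfrak{c}_{i,j})$ is contained in the $i$-th union; intersecting over $i$ gives $\mathcal{V}(\mathfrak{a})\subseteq$ that closed set. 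Since $\mathcal{V}(\mathfrak{a})$ is itself closed and contains $\mathfrak{a}$, this yields $\overline{\{\mathfrak{a}\}}=\mathcal{V}(\mathfrak{a})$.

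Finally I would conclude with the standard lemma: if $Z=\overline{\{p\}}$ and $Z=Z_1\cup Z_2$ with $Z_1,Z_2$ closed in $Z$ (equivalently, since $Z$ is closed, closed in $\mathrm{Spi}\,A$), then $p$ lies in one of them, say $Z_1$, and as $Z_1$ is closed we get $Z=\overline{\{p\}}\subseteq Z_1$; hence $Z$ is not the union of two proper closed subsets, so (being non-empty) it is irreducible. Taking $p=\mathfrak{a}$ finishes the proof. I do not expect a genuine obstacle here; the only spot calling for a little care is the bookkeeping in the third paragraph — checking that ``$\mathcal{V}(\mathfrak{a})$ lies inside every subbasic closed set through $\mathfrak{a}$'' really does propagate through arbitrary intersections of finite unions — together with the initial reduction, via Proposition~\ref{kalg}\,(\ref{vaa0}), to the case in which $\mathfrak{a}$ is an honest element of $\mathrm{Spi}\,A$.
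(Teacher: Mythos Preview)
Your proposal is correct and follows essentially the same route as the paper: both arguments reduce to showing $\mathcal{V}(\mathfrak{a})=\mathcal{C}\!\ell(\{\mathfrak{a}\})$ by writing an arbitrary closed set as an intersection of finite unions of subbasic closed sets and using that $\mathfrak{a}\in\mathcal{V}(\mathfrak{c})$ forces $\mathcal{V}(\mathfrak{a})\subseteq\mathcal{V}(\mathfrak{c})$. Your version is slightly tidier in that it avoids the paper's superfluous case split on whether $\mathcal{C}\!\ell(\mathfrak{a})=\mathrm{Spi}\,A$ and states explicitly the (implicit in the paper) lemma that the closure of a point is irreducible; one small quibble is that the implication ``$\mathcal{V}(\mathfrak{a})\neq\emptyset\Rightarrow\mathfrak{a}\neq A$'' is the \emph{converse}, not the contrapositive, of the last assertion in Proposition~\ref{kalg}\,(\ref{vaa0}), though it is of course trivially true since $\mathcal{V}(A)=\emptyset$.
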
 

\begin{proof} 
Since for every non-empty subbasic closed subset $\mathcal{V}(\mathfrak{a})$ of a $k$-space $\mathrm{Spi}\,A$, the ideal $\mathfrak{a}$ is also in $\mathrm{Spi}\,A,$ it is sufficient to show that $\mathcal{V}(\mathfrak{a})=\mathcal{C}\!\ell(\mathfrak{a})$ for every $\mathfrak{a}\in\mathrm{Spi}\,A$. Observe that $\mathcal{C}\!\ell(\mathfrak{a})$ is the smallest closed set containing $\mathfrak{a}$ and  $\mathcal{V}(\mathfrak{a})$ is a closed set such that $\mathfrak{a}\in\mathrm{Spi}\,A.$Therefore,  $\mathcal{C}\!\ell(\mathfrak{a})\subseteq \mathcal{V}(\mathfrak{a})$.To obtain the reverse inclusion, first consider the case: $\mathcal{C}\!\ell(\mathfrak{a})= \mathrm{Spi}\,A$. Since
$$
\mathrm{Spi}\,A=\mathcal{C}\!\ell(\mathfrak{a})\subseteq \mathcal{V}(\mathfrak{a})\subseteq \mathrm{Spi}\,A,
$$
we obtain $\mathcal{V}(\mathfrak{a})=\mathcal{C}\!\ell(\mathfrak{a})$. Now, let $\mathcal{C}\!\ell(\mathfrak{a})\neq \mathrm{Spi}\,A$. For $\mathcal{C}\!\ell(\mathfrak{a})$, there exists an  index set,  $\Omega$, such that for each $\alpha\in\Omega$, there is a positive integer $n_{\alpha}$ and $\mathfrak{a}_{\alpha 1},\dots, \mathfrak{a}_{\alpha n_\alpha}\in\mathrm{Idl}\,A$ such that 
$$
\mathcal{C}\!\ell(\mathfrak{a})={\bigcap_{\alpha\in\Omega}}\left({\bigcup_{ i\,=1}^{ n_\alpha}}\mathcal{V}(\mathfrak{a}_{\alpha i})\right).
$$
Since by hypothesis,
$\mathcal{C}\!\ell(\mathfrak{a})\neq \mathrm{Spi}\,A$, without loss of generality, assume that ${\bigcup_{ i\,=1}^{ n_\alpha}}\mathcal{V}(\mathfrak{a}_{\alpha i})\neq \emptyset$, for each $\alpha$. Therefore, $\mathfrak{a}\in   {\bigcup_{ i\,=1}^{ n_\alpha}}\mathcal{V}(\mathfrak{a}_{\alpha i})$, for each $\alpha$, and from that we have $$\mathcal{V}(\mathfrak{a})\subseteq {\bigcup_{ i=1}^{ n_\alpha}}\mathcal{V}(\mathfrak{a}_{\alpha i}),$$ \textit{i.e.}, $\mathcal{V}(\mathfrak{a})\subseteq \mathcal{C}\!\ell(\mathfrak{a})$, and this completes the proof.
\end{proof}     

A Zariski space $\mathrm{Spec}\,A$ is connected if and only if the $k$-algebra $A$ does not have any non-trivial idempotent elements. For a $k$-space the situation is much simpler.

\begin{theorem}\label{conis}
Every $k$-space $\mathrm{Spi}\,A$ is connected.
\end{theorem}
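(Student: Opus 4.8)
The plan is to show that $\mathrm{Spi}\,A$ cannot be written as a disjoint union of two non-empty open sets, by exploiting the fact that every non-empty subbasic closed set is irreducible (Theorem \ref{irrc}) together with the presence of a single point that lies in \emph{every} non-empty subbasic closed set. The key observation is that the zero ideal $\mathfrak{o}$ is a proper ideal (assuming $A\neq 0$), hence $\mathfrak{o}\in\mathrm{Spi}\,A$, and by definition $\mathfrak{o}\in\mathcal{V}(\mathfrak{s})$ for \emph{every} ideal $\mathfrak{s}$ of $A$, since $0\in\mathfrak{s}$ always. Equivalently, by Proposition \ref{kalg}(\ref{vaa0}), $\mathcal{V}(\mathfrak{o})=\mathrm{Spi}\,A$, so $\mathfrak{o}$ is a point whose closure $\mathcal{C}\!\ell(\mathfrak{o})=\mathcal{V}(\mathfrak{o})$ (using Theorem \ref{irrc}) is the whole space; that is, $\mathfrak{o}$ is a generic point of $\mathrm{Spi}\,A$.

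First I would note that a topological space possessing a generic point — a point whose closure is the entire space — is automatically connected: if $X = U \sqcup W$ with $U,W$ open, non-empty and disjoint, then the generic point lies in one of them, say $U$, and then $X=\mathcal{C}\!\ell(\{\text{generic point}\})\subseteq\mathcal{C}\!\ell(U)=X\setminus W$ (the last equality because $W$ is open and disjoint from $U$, so $X\setminus W$ is a closed set containing $U$), forcing $W=\emptyset$, a contradiction. Thus I would state and use this as the mechanism. Alternatively, and even more directly, one can argue that every non-empty open set of $\mathrm{Spi}\,A$ contains $\mathfrak{o}$: an arbitrary non-empty open set is a union of basic open sets, each the complement of a finite union $\bigcup_{i=1}^{n}\mathcal{V}(\mathfrak{a}_i)$ of subbasic closed sets; such a basic open set is non-empty precisely when $\bigcup_{i=1}^n\mathcal{V}(\mathfrak{a}_i)\neq\mathrm{Spi}\,A$, which forces each $\mathfrak{a}_i\neq\mathfrak{o}$ by Proposition \ref{kalg}(\ref{vaa0}), hence $\mathfrak{o}\notin\mathcal{V}(\mathfrak{a}_i)$ is false — wait, I must be careful: $\mathfrak{o}\in\mathcal{V}(\mathfrak{a}_i)$ always. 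So instead the cleanest route is the generic-point argument: since $\mathfrak{o}$ belongs to every subbasic \emph{closed} set, it belongs to every closed set of the form appearing in $\mathcal{C}\!\ell(\mathfrak{a})$, and in particular if $\mathrm{Spi}\,A=U\sqcup W$ with $U,W$ closed and non-empty, then $\mathfrak{o}$ lies in both $U$ and $W$, contradicting disjointness.

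Concretely, the steps are: (i) observe $A\neq 0$ (the trivial case $A=0$ gives $\mathrm{Spi}\,A=\emptyset$, which is vacuously connected, or one excludes it by convention), so $\mathfrak{o}\in\mathrm{Spi}\,A$; (ii) observe $\mathfrak{o}\in\mathcal{V}(\mathfrak{s})$ for every ideal $\mathfrak{s}$, hence $\mathfrak{o}$ lies in every non-empty subbasic closed set, hence in every closed set that is an intersection of finite unions of subbasic closed sets that is itself non-empty — in fact in \emph{every} non-empty closed set, since every closed set is an intersection of such finite unions and $\mathfrak{o}$ survives each factor it is not excluded from; (iii) suppose for contradiction $\mathrm{Spi}\,A = C_1 \sqcup C_2$ with $C_1,C_2$ non-empty closed and disjoint; then $\mathfrak{o}\in C_1\cap C_2=\emptyset$, a contradiction; (iv) conclude $\mathrm{Spi}\,A$ is connected. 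The main obstacle is the subtlety in step (ii): one must justify carefully that $\mathfrak{o}$ lies in every non-empty closed set of the \emph{induced} topology (not just the subbasic ones). This follows because an arbitrary non-empty closed set $C$ has the form $\bigcap_{\alpha}\bigcup_{i=1}^{n_\alpha}\mathcal{V}(\mathfrak{a}_{\alpha i})$, and $C\neq\emptyset$ combined with Theorem \ref{irrc} (irreducibility of subbasic closed sets, used as in the proof of that theorem) shows that for each $\alpha$ at least one $\mathcal{V}(\mathfrak{a}_{\alpha i})$ with $\mathfrak{o}$ in it meets $C$; but since $\mathfrak{o}\in\mathcal{V}(\mathfrak{a}_{\alpha i})$ for \emph{all} $i$ and $\alpha$, we get $\mathfrak{o}\in C$ directly. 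So once step (ii) is phrased as ``$\mathfrak{o}$ belongs to every $\mathcal{V}(\mathfrak{s})$, hence to every closed set,'' the rest is immediate.
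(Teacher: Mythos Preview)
Your first paragraph contains the correct idea and essentially matches the paper's proof: the paper simply observes that $\mathrm{Spi}\,A=\mathcal{V}(\mathfrak{o})$ is a non-empty subbasic closed set, hence irreducible by Theorem~\ref{irrc}, hence connected. Your phrasing via a generic point (using $\mathcal{C}\!\ell(\mathfrak{o})=\mathcal{V}(\mathfrak{o})=\mathrm{Spi}\,A$, which is exactly what the proof of Theorem~\ref{irrc} establishes) is an equivalent way to say the same thing.

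However, the rest of your write-up, and in particular your ``concrete steps'' (ii)--(iii), contain a genuine error: you have reversed the direction of containment in $\mathcal{V}$. By definition $\mathcal{V}(\mathfrak{s})=\{\mathfrak{a}\in\mathrm{Spi}\,A\mid \mathfrak{s}\subseteq\mathfrak{a}\}$, so $\mathfrak{o}\in\mathcal{V}(\mathfrak{s})$ means $\mathfrak{s}\subseteq\mathfrak{o}$, i.e.\ $\mathfrak{s}=\mathfrak{o}$. Thus your repeated claim that ``$\mathfrak{o}\in\mathcal{V}(\mathfrak{s})$ for every ideal $\mathfrak{s}$'' and that ``$\mathfrak{o}$ lies in every non-empty closed set'' is false (for instance $\mathfrak{o}\notin\mathcal{V}(\mathfrak{m})=\{\mathfrak{m}\}$ for a nonzero maximal ideal $\mathfrak{m}$). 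The property a generic point enjoys is the dual one: $\mathfrak{o}$ lies in every non-empty \emph{open} set. The disjoint-decomposition argument should therefore be run with $U,W$ non-empty \emph{open}: then $\mathfrak{o}\in U\cap W=\emptyset$, a contradiction. Once you fix this reversal, your argument collapses to exactly the paper's one-line proof; the elaborate step (ii) about intersections of finite unions is unnecessary (and, as written, false).
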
  

\begin{proof}
Since by Proposition \ref{kalg} (\ref{vaa0}), $\mathrm{Spi}\,A=\mathcal{V}(\mathfrak{o})$ and since irreducibility implies connectedness, the desired claim immediately follows from Theorem \ref{irrc}. 
\end{proof}

It is known that every Noetherian 
space can be represented as a finite union of non-empty irreducible closed subsets.
For a $k$-space $\mathrm{Spi}\,A,$ this representation is always possible irrespective of $A$ being Noetherian and hence $\mathrm{Spi}\,A$ being Noetherian. This follows from the fact that $\mathcal{V}(\mathfrak{o})$ is irreducible in $\mathrm{Spi}\,A$. 

Next, we wish to prove that every non-empty irreducible closed subset of a $k$-space has a unique generic point. To this end, notice that if $K$ is an irreducible closed subset of  a topological space $X$ and $\mathcal{S}$ is a closed subbase of $X$, then it is known (see \textsc{Harris} \cite[\textsection 7.2]{H73}) that $K$ is the intersection of members of $\mathcal{S}.$ For a $k$-space we get more. In other words, the converse of Theorem \ref{irrc} is also true. 

\begin{lemma}\label{ircs} 
If $K$ is a non-empty irreducible closed subset of a $k$-space $\mathrm{Spi}\,A$, then $K=\mathcal{V}(\mathfrak{a})$ for some $\mathfrak{a}\in \mathrm{Spi}\,A.$ 
\end{lemma}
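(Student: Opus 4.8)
The plan is to bootstrap the general fact about closed subbases that is quoted just above the statement (from \textsc{Harris} \cite[\textsection 7.2]{H73}) together with the sum formula of Proposition \ref{kalg}(5) and the non-vanishing criterion of Proposition \ref{kalg}(\ref{vaa0}). Throughout, recall that by construction the closed sets of the $k$-space are exactly the intersections of finite unions of members of $\mathcal{C}_{\mathcal{V}}$.

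First I would record why an irreducible closed subset $K$ of $\mathrm{Spi}\,A$ is already an intersection of \emph{subbasic} closed sets. Writing $K=\bigcap_{\alpha\in\Omega}\bigl(\bigcup_{i=1}^{n_\alpha}\mathcal{V}(\mathfrak{a}_{\alpha i})\bigr)$, for each fixed $\alpha$ we have $K=\bigcup_{i=1}^{n_\alpha}\bigl(K\cap\mathcal{V}(\mathfrak{a}_{\alpha i})\bigr)$, a finite union of closed subsets of $K$; irreducibility then forces $K\subseteq\mathcal{V}(\mathfrak{a}_{\alpha i_\alpha})$ for some index $i_\alpha$. Intersecting over $\alpha$ yields $K\subseteq\bigcap_{\alpha}\mathcal{V}(\mathfrak{a}_{\alpha i_\alpha})\subseteq\bigcap_\alpha\bigcup_i\mathcal{V}(\mathfrak{a}_{\alpha i})=K$, so $K=\bigcap_{\alpha\in\Omega}\mathcal{V}(\mathfrak{b}_\alpha)$ with $\mathfrak{b}_\alpha:=\mathfrak{a}_{\alpha i_\alpha}\in\mathrm{Idl}\,A$. (This is precisely the Harris fact, and could simply be cited rather than reproduced; the degenerate case $\Omega=\emptyset$, i.e. $K=\mathrm{Spi}\,A=\mathcal{V}(\mathfrak{o})$, is already covered by Proposition \ref{kalg}(\ref{vaa0}).)

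Next I would collapse the intersection into a single subbasic set. By Proposition \ref{kalg}(5),
\[
K=\bigcap_{\alpha\in\Omega}\mathcal{V}(\mathfrak{b}_\alpha)=\mathcal{V}\!\Bigl(\sum_{\alpha\in\Omega}\mathfrak{b}_\alpha\Bigr),
\]
so $K=\mathcal{V}(\mathfrak{a})$ with $\mathfrak{a}:=\sum_{\alpha\in\Omega}\mathfrak{b}_\alpha\in\mathrm{Idl}\,A$. It remains to check that $\mathfrak{a}$ is proper, and this is where the hypothesis $K\neq\emptyset$ is used: by Proposition \ref{kalg}(\ref{vaa0}), $\mathcal{V}(\mathfrak{a})=\emptyset$ would force $\mathfrak{a}=A$, so $\mathcal{V}(\mathfrak{a})=K\neq\emptyset$ gives $\mathfrak{a}\subsetneq A$, that is, $\mathfrak{a}\in\mathrm{Spi}\,A$. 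Hence $K=\mathcal{V}(\mathfrak{a})$ for some $\mathfrak{a}\in\mathrm{Spi}\,A$, as claimed.

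I do not expect a serious obstacle here: the only genuinely topological input is the reduction of an irreducible closed set to an intersection of subbasic closed sets — the single place where irreducibility is invoked — and once that is available, Proposition \ref{kalg}(5) and (\ref{vaa0}) finish the argument mechanically. The one point to watch is the empty-intersection case, which should be dispatched separately via $\mathrm{Spi}\,A=\mathcal{V}(\mathfrak{o})$.
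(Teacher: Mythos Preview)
Your proposal is correct and matches the paper's intended argument: the lemma is stated without proof, but the sentence immediately preceding it cites \textsc{Harris} for the reduction of an irreducible closed set to an intersection of subbasic closed sets, and your subsequent use of Proposition~\ref{kalg}(5) to collapse that intersection into a single $\mathcal{V}(\mathfrak{a})$, together with non-emptiness to force $\mathfrak{a}\in\mathrm{Spi}\,A$, is exactly what the paper leaves implicit. One small remark: for the properness step you actually need the (trivial) implication $\mathfrak{a}=A\Rightarrow\mathcal{V}(\mathfrak{a})=\emptyset$, which follows directly from the definition of $\mathrm{Spi}\,A$ rather than from the direction of Proposition~\ref{kalg}(\ref{vaa0}) you cite.
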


\begin{proposition}\label{sob}  
Every $k$-space is sober.
\end{proposition}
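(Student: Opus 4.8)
The plan is to verify the definition of sobriety directly: every non-empty irreducible closed subset of a $k$-space must have a unique generic point. The substantive work has already been carried out, so the proof will mostly be a matter of assembling earlier results. Specifically, Lemma \ref{ircs} says that every non-empty irreducible closed subset of $\mathrm{Spi}\,A$ has the form $\mathcal{V}(\mathfrak{a})$ for some $\mathfrak{a}\in\mathrm{Spi}\,A$, and the computation inside the proof of Theorem \ref{irrc} shows that $\mathcal{C}\!\ell(\mathfrak{a})=\mathcal{V}(\mathfrak{a})$ for \emph{every} $\mathfrak{a}\in\mathrm{Spi}\,A$.

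First I would fix a non-empty irreducible closed subset $K$ of $\mathrm{Spi}\,A$ and apply Lemma \ref{ircs} to write $K=\mathcal{V}(\mathfrak{a})$ for some proper ideal $\mathfrak{a}$. Then, using the identity $\mathcal{C}\!\ell(\mathfrak{a})=\mathcal{V}(\mathfrak{a})$ from the proof of Theorem \ref{irrc}, I obtain $K=\mathcal{C}\!\ell(\mathfrak{a})$, so that $\mathfrak{a}$ is a generic point of $K$. This establishes existence of a generic point.

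For uniqueness, suppose $\mathfrak{b}\in\mathrm{Spi}\,A$ is another generic point of $K$, i.e.\ $K=\mathcal{C}\!\ell(\mathfrak{b})$. Applying the same identity again gives $\mathcal{V}(\mathfrak{b})=\mathcal{C}\!\ell(\mathfrak{b})=K=\mathcal{V}(\mathfrak{a})$, and since $\mathcal{V}$ is injective on $\mathrm{Idl}\,A$ (the observation recorded just before Proposition \ref{ct0t1}, which is also what makes a $k$-space $T_{\scriptscriptstyle 0}$), this forces $\mathfrak{a}=\mathfrak{b}$. Alternatively, one may simply invoke that generic points are unique in any $T_{\scriptscriptstyle 0}$ space together with Proposition \ref{ct0t1}.

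I do not expect a genuine obstacle here; the only point demanding a moment's care is to make sure the equality $\mathcal{C}\!\ell(\mathfrak{a})=\mathcal{V}(\mathfrak{a})$ is invoked for an arbitrary $\mathfrak{a}\in\mathrm{Spi}\,A$, which is exactly the scope in which it was proved in Theorem \ref{irrc}. In effect all the difficulty was front-loaded into Theorem \ref{irrc} and Lemma \ref{ircs}, and the proof of this proposition is correspondingly brief.
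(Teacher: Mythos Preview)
Your proposal is correct and follows essentially the same route as the paper: invoke Lemma \ref{ircs} to write a non-empty irreducible closed set as $\mathcal{V}(\mathfrak{a})$, use the equality $\mathcal{C}\!\ell(\mathfrak{a})=\mathcal{V}(\mathfrak{a})$ established in Theorem \ref{irrc} to exhibit $\mathfrak{a}$ as a generic point, and deduce uniqueness from the $T_{\scriptscriptstyle 0}$ property (Proposition \ref{ct0t1}). The paper spells out the two inclusions for $\mathcal{C}\!\ell(\mathfrak{a})=\mathcal{V}(\mathfrak{a})$ separately rather than citing the identity wholesale, but the substance is identical.
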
  

\begin{proof}  
It follows from Lemma \ref{ircs} that every non-empty irreducible closed subset of $\mathrm{Spi}\,A$ is of the form $\mathcal{V}(\mathfrak{a})$, where $\mathfrak{a}\in \mathrm{Spi}\,A.$ Let $\mathcal{V}(\mathfrak{a})$ be a non-empty irreducible closed subset of $\mathrm{Spi}\,A.$ Since $\mathfrak{a}\in\mathcal{V}(\mathfrak{a})$, we have $\mathcal{C}\!\ell(\mathfrak{a})\subseteq \mathcal{V}(\mathfrak{a}).$ Therefore, to show $\mathcal{V}(\mathfrak{a})$ has a generic point, it is now sufficient to show that $\mathcal{C}\!\ell(\mathfrak{a})\supseteq \mathcal{V}(\mathfrak{a}).$ Since $\mathcal{C}_{\mathcal{V}}$ is a closed subbase of $\mathrm{Spi}\,A$, the required containment follows from Lemma  \ref{irrc}. Moreover, by Proposition \ref{ct0t1}, every  $k$-space is $T_{\scriptscriptstyle  0}$. So, we have the uniqueness of a  generic point. 
\end{proof}  

According to \textsc{Hochster} \cite{H69}, a topological space is called \emph{spectral} if it is  quasi-compact, sober,  admitting a basis of quasi-compact open subspaces that is closed under finite intersections. It has also been shown in \cite{H69} that a Zariski space is spectral. We wish to show that a $k$-space is also spectral and our proof is constructible topology-independent and avoids the  checking of the existence of a basis of quasi-compact open subspaces that is closed under finite intersections. The key to our proof is the following 

\begin{lemma}\label{cso}
A quasi-compact, sober, open subspace of a spectral space is spectral. 
\end{lemma}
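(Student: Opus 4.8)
The plan is to keep the given data — $U$ is quasi-compact and sober — and to manufacture, out of a spectral basis of the ambient space $X$, a spectral basis of $U$; once that is done, all three clauses of Hochster's definition of a spectral space hold for $U$.

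First I would fix a basis $\mathcal{B}$ of $X$ consisting of quasi-compact open subsets and closed under finite intersections, and set
\[
\mathcal{B}_U = \{\, B \in \mathcal{B} \mid B \subseteq U \,\} \cup \{U\}.
\]
The next step is to check that every member of $\mathcal{B}_U$ is a quasi-compact open subset of the subspace $U$. Openness is immediate: since $U$ is open in $X$ and each such $B$ is open in $X$ with $B \subseteq U$, the set $B = B \cap U$ is open in the subspace topology of $U$ (and $U$ itself is of course open in $U$). For quasi-compactness one invokes transitivity of the subspace topology: the topology that $B$ inherits from $U$ coincides with the one it inherits from $X$, so $B$ is quasi-compact as a subspace of $U$ precisely because it is so as a subspace of $X$; and $U$ is quasi-compact by hypothesis.

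Then I would verify the two structural properties of $\mathcal{B}_U$. That it is a basis for $U$: if $W$ is open in $U$, then $W$ is open in $X$ (as $U$ is open in $X$), so $W$ is a union of members of $\mathcal{B}$ contained in $W$, and each such member is contained in $U$, hence lies in $\mathcal{B}_U$. That it is closed under finite intersections: for $B_1,B_2\in\mathcal{B}$ with $B_i\subseteq U$ one has $B_1\cap B_2\in\mathcal{B}$ and $B_1\cap B_2\subseteq U$, so $B_1\cap B_2\in\mathcal{B}_U$; the intersections involving the extra element are $U\cap B=B$ and $U\cap U=U$, already in $\mathcal{B}_U$; and the empty intersection is $U\in\mathcal{B}_U$. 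Combining this basis with the standing hypotheses that $U$ is quasi-compact and sober shows that $U$ is spectral.

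I do not expect a genuine obstacle here: the argument is bookkeeping with subspace topologies, and the only point that deserves care is that ``quasi-compact open'' must be read relative to $U$ — which is exactly why transitivity of subspace topologies and the hypothesis that $U$ (and each relevant basic open) be quasi-compact are used. It is worth noting that soberness of $U$ is in fact automatic (an open subspace of a sober space is again sober), so in applications the clause that actually requires attention is the quasi-compactness of $U$; since both are assumed in the statement, nothing further is needed.
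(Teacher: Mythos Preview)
Your argument is correct and follows essentially the same route as the paper's: both hinge on the observation that, because $U$ is open in $X$, a subset of $U$ is open (respectively quasi-compact open) in $U$ if and only if it is so in $X$, and then one reads off the required basis from the spectral structure of $X$. The only cosmetic difference is that the paper works directly with the full collection $\mathcal{O}_U$ of quasi-compact opens of $U$, whereas you restrict a chosen spectral basis of $X$ and adjoin $U$; the verifications are the same.
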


\begin{proof}
Suppose $S$ is a quasi-compact, sober, open subspace of a spectral space $X$. Since $S$ is quasi-compact and sober, it is sufficient to prove that the set $\mathcal{O}_{\scriptscriptstyle S}$ of compact open subsets of $S$ forms a basis of a topology that is closed under finite intersections. It is obvious that a subset $T$ of $S$ is open in $S$ if and only if $T$ is open in $X$, and hence a subset $T$ of $S$ belongs to $\mathcal{O}_{\scriptscriptstyle S}$ if and only if $T$ belongs to $\mathcal{O}_{\scriptscriptstyle X}.$ Now 
using these facts, we argue as follows.

Let $U$ be an open subset of $S$. Since $U$ is also open in $X$, we have $U=\cup\, \mathcal{U},$ for some subset $\mathcal{U}$ of $\mathcal{O}_{\scriptscriptstyle X}.$ But each element of $\mathcal{U}$ being a subset of $U$ is a subset of $S$, and it belongs to $\mathcal{O}_{\scriptscriptstyle S}.$ Therefore, every open subset of $S$ can be presented as a union of compact open subsets of $S$. Now it remains to prove that $\mathcal{O}_{\scriptscriptstyle S}$ is closed under finite intersections, but this immediately follows from the fact that $\mathcal{O}_{\scriptscriptstyle X}$ is closed under finite intersections. 
\end{proof}

\begin{theorem}
Every $k$-space is spectral.
\end{theorem}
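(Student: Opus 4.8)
The intention is to obtain the theorem from Lemma~\ref{cso}, using that a $k$-space has already been shown to be quasi-compact (Proposition~\ref{comp}) and sober (Proposition~\ref{sob}). Granting those two facts, it is enough to exhibit $\mathrm{Spi}\,A$ as an \emph{open} subspace of some spectral space; Lemma~\ref{cso} then delivers the conclusion at once, and, in particular, the existence of a basis of quasi-compact open sets closed under finite intersections is never inspected directly for $\mathrm{Spi}\,A$ itself.

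The ambient space I would take is $\mathrm{Idl}\,A$, the set of \emph{all} ideals of $A$ (now with the improper ideal $A$ adjoined as a point), equipped with the topology whose closed subbase is $\{\overline{\mathcal{V}}(\mathfrak{a})\mid\mathfrak{a}\in\mathrm{Idl}\,A\}$, where $\overline{\mathcal{V}}(\mathfrak{a})=\{\mathfrak{b}\in\mathrm{Idl}\,A\mid\mathfrak{a}\subseteq\mathfrak{b}\}$. First I would check that $\mathrm{Spi}\,A$ is an open subspace: the singleton $\{A\}=\overline{\mathcal{V}}(A)$ is a subbasic closed set, so its complement $\mathrm{Spi}\,A$ is open, and since $\overline{\mathcal{V}}(\mathfrak{a})\cap\mathrm{Spi}\,A=\mathcal{V}(\mathfrak{a})$ for every ideal $\mathfrak{a}$, the topology that $\mathrm{Spi}\,A$ inherits as a subspace of $\mathrm{Idl}\,A$ is precisely the $k$-topology determined by $\mathcal{C}_{\mathcal{V}}$.

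The remaining task is to show that $\mathrm{Idl}\,A$ is itself spectral, and here the earlier arguments carry over, in fact with a simplification, because every subbasic closed set of $\mathrm{Idl}\,A$ is automatically non-empty. The space is $T_{\scriptscriptstyle 0}$ since $\overline{\mathcal{V}}$ is injective on ideals; it is quasi-compact since $A$ lies in every subbasic closed set, hence in no subbasic open set, so that $\mathrm{Idl}\,A$ has no cover by subbasic open sets at all and the Alexander Subbase Theorem applies vacuously; and it is sober since the proofs of Theorem~\ref{irrc}, Lemma~\ref{ircs} and Proposition~\ref{sob} go through unchanged, yielding $\overline{\mathcal{V}}(\mathfrak{a})=\mathcal{C}\!\ell(\mathfrak{a})$ for each ideal $\mathfrak{a}$ and hence a unique generic point $\bigcap K$ for each non-empty irreducible closed subset $K$. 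For the last spectrality requirement I would use the basic open sets $W=D(x_1)\cap\cdots\cap D(x_n)$, where $D(x)=\mathrm{Idl}\,A\setminus\overline{\mathcal{V}}(\langle x\rangle)$: these form a basis closed under finite intersections, and each $W$ is quasi-compact. Indeed, if $W\subseteq\bigcup_{\alpha}D(z_{\alpha})$ and $\mathfrak{z}$ denotes the ideal generated by all the $z_{\alpha}$, then, using $\bigcap_{\alpha}\overline{\mathcal{V}}(\langle z_{\alpha}\rangle)=\overline{\mathcal{V}}(\mathfrak{z})$, complementation gives $\overline{\mathcal{V}}(\mathfrak{z})\subseteq\overline{\mathcal{V}}(\langle x_1\rangle)\cup\cdots\cup\overline{\mathcal{V}}(\langle x_n\rangle)$; since $\mathfrak{z}\in\overline{\mathcal{V}}(\mathfrak{z})$, some $x_i$ lies in $\mathfrak{z}$, so $x_i=\sum_{j=1}^{m}a_jz_{\alpha_j}$ is a \emph{finite} $A$-combination of the generators, whence $\langle x_i\rangle\subseteq\langle z_{\alpha_1},\dots,z_{\alpha_m}\rangle$ and therefore $D(z_{\alpha_1})\cup\cdots\cup D(z_{\alpha_m})\supseteq D(x_i)\supseteq W$.

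With $\mathrm{Idl}\,A$ shown to be spectral and $\mathrm{Spi}\,A$ a quasi-compact, sober, open subspace of it, Lemma~\ref{cso} finishes the proof. The step carrying the real weight --- and the one I expect to be the main obstacle --- is the quasi-compactness of the basic opens $W$ of $\mathrm{Idl}\,A$: this is precisely where the finitary generation of ideals enters, and it amounts to the statement that the set of all ideals of a ring carries a spectral topology (compare the study of such topologies in \cite{DG22}); everything else is routine bookkeeping.
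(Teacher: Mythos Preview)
Your strategy is exactly the paper's: realise $\mathrm{Spi}\,A$ as an open subspace of the spectral space $\mathrm{Idl}\,A$ and then invoke Lemma~\ref{cso} together with Propositions~\ref{comp} and~\ref{sob}. The only difference is how the spectrality of $\mathrm{Idl}\,A$ is obtained: the paper simply quotes \textsc{Priestley}~\cite[Theorem~4.2]{P94}, whereas you reprove it by hand (and also make explicit that the subspace topology coincides with the $k$-topology). Your direct verification is correct --- the finitary argument for the quasi-compactness of the basic opens $D(x_1)\cap\cdots\cap D(x_n)$ is exactly the right place where the algebra enters --- and has the advantage of being self-contained, while the paper's version is shorter at the cost of an external reference.
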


\begin{proof}
It is well known (see \textsc{Priestley} \cite[Theorem 4.2]{P94}) that the set $\mathrm{Idl}\,A$ endowed with a $k$-topology is spectral. Now, if we extend the domain of $\mathcal{V}$ to $\mathrm{Idl}\,A$, then it is easy to see that with some routine changes of notation in the proof, Theorem \ref{irrc} still holds. Moreover, we have  $\{A\}=\mathcal{V}(A)=\mathcal{C}\!\ell(A),$ and therefore $\mathrm{Idl}\,A {\setminus}\mathrm{Spi}\,A$ is closed, and that implies $\mathrm{Spi}\,A$ is open. The desired claim now follows from Lemma \ref{cso}, Proposition \ref{comp}, and Proposition \ref{sob}. 
\end{proof}

Once we have $k$-spaces, it is natural to consider the continuous maps between such spaces. Using subbasic-closed-set formulation of continuity, we obtain the following properties. 

\begin{proposition}\label{conmap}
Let $\phi\colon A\to A'$ be a $k$-algebra homomorphism  and $\mathfrak{b}\in\mathrm{Spi}\,A'.$ Then
\begin{enumerate}
		
\item \label{contxr} the map $\phi^*\colon  \mathrm{Spi}\,A'\to \mathrm{Spi}\,A$ defined by  $\phi^*(\mathfrak{b})=f\inv(\mathfrak{b})$ is    continuous$;$
		
\item \label{shcs} if $\phi$ is  surjective, then the $k$-space $\mathrm{Spi}\,A'$ is homeomorphic to the closed subspace $\mathcal{V}(\mathrm{ker}\,\phi)$ of the $k$-space $\mathrm{Spi}\,A;$
		
\item \label{imde} the image  $\phi^*(\mathrm{Spi}\,A')$ is dense in $\mathrm{Spi}\,A$ if and only if $$\mathrm{ker}\,\phi\subseteq \bigcap_{\mathfrak{s}\in \mathrm{Spi}\,A}\mathfrak{s};$$
		
\item \label{loca} if $A_{S}$ is the localization of a $k$-algebra $A$ at a multiplicative closed subset $S$, then there is a closed, continuous, and injective map from the $k$-space  $\mathrm{Spi}(R_{ S})$ to the $k$-space $$(\mathrm{Spi}\,A)_S:=\{ \mathfrak{s}\in \mathrm{Spi}\,A\mid \mathfrak{s}\cap S=\emptyset\}.$$  
\end{enumerate}
\end{proposition}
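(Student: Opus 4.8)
\textbf{Proof plan for Proposition \ref{conmap}.}

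The plan is to treat the four items essentially independently, relying throughout on the subbasic-closed-set criterion for continuity: a map into a $k$-space is continuous exactly when the preimage of each subbasic closed set $\mathcal{V}(\mathfrak{a})$ is closed. For item (\ref{contxr}), I would first check that $\phi^*$ is well defined, i.e. that $\phi\inv(\mathfrak{b})$ is a proper ideal of $A$ whenever $\mathfrak{b}$ is a proper ideal of $A'$; properness holds because $1\notin\mathfrak{b}$ forces $1=\phi(1)\notin\mathfrak{b}$, so $1\notin\phi\inv(\mathfrak{b})$. Then the computation $(\phi^*)\inv(\mathcal{V}(\mathfrak{a}))=\{\mathfrak{b}\in\mathrm{Spi}\,A'\mid \mathfrak{a}\subseteq\phi\inv(\mathfrak{b})\}=\{\mathfrak{b}\mid \phi(\mathfrak{a})\subseteq\mathfrak{b}\}=\{\mathfrak{b}\mid \langle\phi(\mathfrak{a})\rangle\subseteq\mathfrak{b}\}=\mathcal{V}(\langle\phi(\mathfrak{a})\rangle)$ shows the preimage of a subbasic closed set is subbasic closed (using Proposition \ref{kalg}(1)), which gives continuity.

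For item (\ref{shcs}), assume $\phi$ is surjective. The bijection $\mathrm{Spi}\,A'\to\mathcal{V}(\ker\phi)$ is the classical correspondence theorem for ideals: proper ideals of $A'$ correspond bijectively, via $\mathfrak{b}\mapsto\phi\inv(\mathfrak{b})$ and $\mathfrak{a}\mapsto\phi(\mathfrak{a})$, to proper ideals of $A$ containing $\ker\phi$, i.e. to elements of $\mathcal{V}(\ker\phi)$. I would then observe that this bijection is exactly the corestriction of $\phi^*$, hence continuous by (\ref{contxr}); and its inverse $\mathfrak{a}\mapsto\phi(\mathfrak{a})$ is continuous because, for a subbasic closed set $\mathcal{V}(\mathfrak{b})\cap\mathrm{Spi}\,A'$ of $\mathrm{Spi}\,A'$ (where I may take $\mathfrak{b}\supseteq\ker\phi$), its preimage under $\mathfrak{a}\mapsto\phi(\mathfrak{a})$ is $\{\mathfrak{a}\in\mathcal{V}(\ker\phi)\mid\mathfrak{b}\subseteq\mathfrak{a}\}=\mathcal{V}(\mathfrak{b})\cap\mathcal{V}(\ker\phi)=\mathcal{V}(\mathfrak{b})$ (since $\mathfrak{b}\supseteq\ker\phi$ already), which is closed in the subspace. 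So both maps are continuous and we have a homeomorphism onto the closed subspace $\mathcal{V}(\ker\phi)$.

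For item (\ref{imde}), the closure of $\phi^*(\mathrm{Spi}\,A')$ is computed via the subbase: it is the intersection of all subbasic closed sets $\mathcal{V}(\mathfrak{a})$ containing the image. Now $\mathcal{V}(\mathfrak{a})\supseteq\phi^*(\mathrm{Spi}\,A')$ means $\mathfrak{a}\subseteq\phi\inv(\mathfrak{b})$ for every $\mathfrak{b}\in\mathrm{Spi}\,A'$, equivalently $\mathfrak{a}\subseteq\bigcap_{\mathfrak{b}}\phi\inv(\mathfrak{b})=\phi\inv\big(\bigcap_{\mathfrak{b}}\mathfrak{b}\big)$. Density means this closure is all of $\mathrm{Spi}\,A=\mathcal{V}(\mathfrak{o})$, which (by the fact that $\mathcal{V}(\mathfrak{a})=\mathrm{Spi}\,A$ iff $\mathfrak{a}=\mathfrak{o}$, Proposition \ref{kalg}(\ref{vaa0})) forces the largest such $\mathfrak{a}$, namely $\phi\inv(\bigcap_{\mathfrak{b}}\mathfrak{b})$, to be $\mathfrak{o}$; and since $\ker\phi=\phi\inv(\mathfrak{o})\subseteq\phi\inv(\bigcap_{\mathfrak{b}}\mathfrak{b})$, while conversely $\bigcap_{\mathfrak{b}}\mathfrak{b}\supseteq\mathfrak{o}$ gives $\phi\inv(\bigcap_\mathfrak{b}\mathfrak{b})\supseteq\ker\phi$, a short argument threading these reductions yields the stated criterion $\ker\phi\subseteq\bigcap_{\mathfrak{s}\in\mathrm{Spi}\,A}\mathfrak{s}$. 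For item (\ref{loca}), I would use the standard localization map $\iota\colon A\to A_S$ and note that $\iota^*$ restricted suitably lands in $(\mathrm{Spi}\,A)_S$; injectivity and the image description come from the bijection between (proper) ideals of $A_S$ and ideals of $A$ disjoint from $S$, and closedness of the map is where the subbase again does the work.

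The main obstacle I anticipate is item (\ref{loca}): unlike the prime-ideal case, the correspondence between ideals of $A_S$ and ideals of $A$ disjoint from $S$ is only an injection (a general ideal of $A$ need not be its own saturation/contraction-of-extension), so I must be careful to state the target as the \emph{image}, verify that $\iota^*$ of a proper ideal meeting $S$ emptily is again proper, and check that the map is closed by exhibiting the image of each subbasic closed set $\mathcal{V}(\mathfrak{a})\subseteq\mathrm{Spi}(A_S)$ as a subbasic (relatively) closed set $\mathcal{V}(\iota\inv(\mathfrak{a}))\cap(\mathrm{Spi}\,A)_S$ — the density-style computations in (\ref{imde}) and the homeomorphism-onto-closed-subspace technique in (\ref{shcs}) are the models to imitate, but the failure of surjectivity of the ideal correspondence is the point requiring genuine care.
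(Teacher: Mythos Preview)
Your treatment of items (\ref{contxr}), (\ref{shcs}), and (\ref{loca}) is correct and aligns closely with the paper's proof. For (\ref{shcs}) you verify continuity of the inverse map, whereas the paper instead checks directly that $\phi^*$ is closed by showing $\phi^*(\mathcal{V}(\mathfrak{a}))=\mathcal{V}(\phi\inv(\mathfrak{a}))$ and then pushing this through arbitrary intersections of finite unions; the two approaches are equivalent for bijections, and yours is arguably tidier. For (\ref{loca}) the paper does exactly what you outline: it checks that $\phi^*(\mathfrak{a})\cap S=\emptyset$, injectivity via $\phi(\phi\inv(\mathfrak{a}))=\mathfrak{a}$, continuity from (\ref{contxr}), and closedness from $\phi^*(\mathcal{V}(\mathfrak{a}))=\mathcal{V}(\phi\inv(\mathfrak{a}))$.

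There is, however, a genuine gap in your argument for (\ref{imde}). You write that the closure of $\phi^*(\mathrm{Spi}\,A')$ ``is the intersection of all subbasic closed sets $\mathcal{V}(\mathfrak{a})$ containing the image.'' This is false in general for a topology generated by a subbase: closed sets are intersections of \emph{finite unions} of subbasic closed sets, and the whole point of the $k$-topology (Proposition~\ref{kalg}(\ref{ncl})) is that $\mathcal{V}(\mathfrak{a})\cup\mathcal{V}(\mathfrak{b})$ is typically strictly smaller than any single $\mathcal{V}(\mathfrak{c})$ containing it. For instance, in $A=k[x,y]$ the closed set $\mathcal{V}((x))\cup\mathcal{V}((y))$ is strictly contained in $\mathcal{V}((x)\cap(y))$, the smallest subbasic closed set above it. So one direction of your equivalence (from the kernel condition back to density) does not follow from what you have written.

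The paper avoids this by computing the closure directly as $\mathcal{V}(\ker\phi)$: since $\mathfrak{o}\in\mathrm{Spi}\,A'$, the point $\ker\phi=\phi\inv(\mathfrak{o})$ lies in the image, and every element of the image contains $\ker\phi$, so $\phi^*(\mathrm{Spi}\,A')\subseteq\mathcal{V}(\ker\phi)$; now Theorem~\ref{irrc} gives $\mathcal{C}\!\ell(\{\ker\phi\})=\mathcal{V}(\ker\phi)$, whence $\mathcal{C}\!\ell(\phi^*(\mathrm{Spi}\,A'))=\mathcal{V}(\ker\phi)$. Density is then equivalent to $\mathcal{V}(\ker\phi)=\mathrm{Spi}\,A$, which by Proposition~\ref{kalg}(\ref{vaa0}) means $\ker\phi=\mathfrak{o}=\bigcap_{\mathfrak{s}\in\mathrm{Spi}\,A}\mathfrak{s}$. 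Inserting this one observation fixes your argument.
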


\begin{proof}      
To show (\ref{contxr}), let $\mathcal{V}(\mathfrak{a})$ be a   subbasic closed set of the ideal  space $\mathrm{Spi}\,A.$ Observe  that $$(\phi^*)\inv(\mathcal{V}(\mathfrak{a})) =\{\mathfrak{b}\in \mathrm{Spi}\,A'\mid \phi(\mathfrak{a})\subseteq \mathfrak{b}\}=\mathcal{V}(\langle \phi(\mathfrak{a})\rangle),$$ and hence the map $\phi^*$  continuous.     
For the homeomorphism in (\ref{shcs}), observe that     
$\mathrm{ker}\,\phi\subseteq \phi\inv(\mathfrak{b}),$ in other words, $\phi^*(\mathfrak{b})\in \mathcal{V}(\mathrm{ker}\,\phi)$. This implies that $\mathrm{im}\,\phi^*=\mathcal{V}(\mathrm{ker}\,\phi).$  
Since for all $\mathfrak{b}\in \mathrm{Spi}\,A',$ $$\phi(\phi^*(\mathfrak{b}))=\mathfrak{b}\cap \mathrm{im}\,\phi=\mathfrak{b},$$ the map $\phi^*$ is injective. To show that $\phi^*$ is a closed map, first we observe that for any subbasic closed subset  $\mathcal{V}(\mathfrak{a})$ of  $\mathrm{Spi}\,A'$, we have $$\phi^*(\mathcal{V}(\mathfrak{a}))=\phi\inv\{ \mathfrak{i}'\in \mathrm{Spi}\,A'\mid \mathfrak{a}\subseteq   \mathfrak{i}'\}=\mathcal{V}(\phi\inv(\mathfrak{a})).$$   Now if $K$ is a closed subset of $\mathrm{Spi}\,A'$ and if $$K=\bigcap_{ \alpha \in \Omega} \left(\bigcup_{ i \,= 1}^{ n_{\alpha}} \mathcal{V}(\mathfrak{a}_{ i\alpha})\right),$$ then $$\phi^*(K)=\phi\inv \left(\bigcap_{ \alpha \in \Omega} \left(\bigcup_{ i = 1}^{ n_{\alpha}} \mathcal{V}(\mathfrak{a}_{ i\alpha})\right)\right)=\bigcap_{ \alpha \in \Omega} \bigcup_{ i = 1}^{ n_{\alpha}} \mathcal{V}(\phi\inv(\mathfrak{a}_{ i\alpha})),$$ a closed subset of  $\mathrm{Spi}\,A.$ Since $\phi^*$ is   continuous, we have the proof.
To prove (\ref{imde}), we first show that $\mathcal{C}\!\ell(\phi^*(\mathcal{V}(\mathfrak{b})))=\mathcal{V}(\phi\inv(\mathfrak{b})),$ for all ideals $\mathfrak{b}\in R'.$ To this end, let $\mathfrak{s}\in \phi^*(\mathcal{V}(\mathfrak{b})).$ This implies $\phi(\mathfrak{s})\in \mathcal{V}(\mathfrak{b}),$ which means $\mathfrak{b}\subseteq \phi(\mathfrak{s}).$ In other words, $\mathfrak{s}\in \mathcal{V}(\phi\inv(\mathfrak{b})).$ The other inclusion follows from the fact that $\phi\inv(\mathcal{V}(\mathfrak{b}))=\mathcal{V}(\phi\inv(\mathfrak{b})).$ Since $$\mathcal{C}\!\ell(\phi^*(\mathrm{Spi}\,A'))=\mathcal{V}(\phi\inv(\mathfrak{o}))=\mathcal{V}(\mathrm{ker}\,\phi),$$ the closed subspace $\mathcal{V}(\mathrm{ker}\,\phi)$ is equal to $\mathrm{Spi}\,A$ if and only if $\mathrm{ker}\,\phi\subseteq \cap_{\mathfrak{s}\in \mathrm{Spi}\,A}\mathfrak{s}.$ 
Finally, to have (\ref{loca}), it is easy to see that the ring homomorphism $\phi \colon A\to A_{S}$ defined by $\phi(r)=r/1$ induces a map $\phi^*\colon \mathrm{Spi}\,A_S\to \mathrm{Spi}\,A$ defined by $\phi^*(\mathfrak{a})=\phi\inv(\mathfrak{a}).$ We claim that  $\phi^*(\mathfrak{a})\cap S=\emptyset.$ If not, let $s\in \phi^*(\mathfrak{a})\cap S.$ Then $$\phi(s)\in \phi(\phi\inv(\mathfrak{a})\cap S)=\phi(\phi\inv(\mathfrak{a}))\cap \phi(S)=\mathfrak{a}\cap \phi(S),$$ and hence $\phi(s)\in \mathfrak{a}.$ Since $\phi(s)$ is a unit in $A_{S},$ this implies $\mathfrak{a}=A_{ S},$ a contradiction. Therefore, $\phi^*$ is indeed a map from $\mathrm{Spi}\,A_{S}$ to $(\mathrm{Spi}\,A)_S.$ If $\phi^*(\mathfrak{a})=\phi^*(\mathfrak{b})$ for some $\mathfrak{a},$ $\mathfrak{b}\in \mathrm{Spi}\,A_{S},$ then $$\mathfrak{a}=\phi(\phi\inv(\mathfrak{a}))=\phi(\phi\inv (\mathfrak{b}))=\mathfrak{b}$$ shows that $\phi^*$ is injective.   
The map    $\phi^*\colon\mathrm{Spi}\,A_S\to\mathrm{Spi}\,A\backslash S$ is continuous follows from (\ref{contxr}). Since $\phi^*(\mathcal{V}(\mathfrak{a}))=\mathcal{V}(\phi\inv(\mathfrak{a})),$ the map $\phi^*$ is also closed. Therefore, $\phi^*$ has the desired properties.    
\end{proof}    

\begin{corollary}
The $k$-space $\mathrm{Spi}\,(A/\mathfrak{a})$ is homeomorphic to the closed subspace
$\mathcal{V}(\mathfrak{a})$ of $\mathrm{Spi}\,A$.   
\end{corollary}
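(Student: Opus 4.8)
The plan is to obtain this corollary as a direct specialization of Proposition~\ref{conmap}\,(\ref{shcs}). First I would take the canonical quotient map $\pi\colon A\to A/\mathfrak{a}$, which is a surjective $k$-algebra homomorphism. Applying Proposition~\ref{conmap}\,(\ref{shcs}) with $\phi=\pi$ gives immediately that $\mathrm{Spi}\,(A/\mathfrak{a})$ is homeomorphic to the closed subspace $\mathcal{V}(\mathrm{ker}\,\pi)$ of $\mathrm{Spi}\,A$.

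The only remaining point is the identification $\mathrm{ker}\,\pi=\mathfrak{a}$, which is immediate from the definition of the quotient, so $\mathcal{V}(\mathrm{ker}\,\pi)=\mathcal{V}(\mathfrak{a})$. One small thing worth a sentence: in order for $\mathcal{V}(\mathfrak{a})$ to be a legitimate subbasic closed set and a nonempty subspace, one should note that $\mathfrak{a}$ is a proper ideal (so that $A/\mathfrak{a}\neq 0$ and $\mathrm{Spi}\,(A/\mathfrak{a})$ is the relevant object); this is the implicit hypothesis under which the statement makes sense, matching the convention in Proposition~\ref{kalg} that $\mathcal{V}$ is applied to ideals and $\mathcal{V}(\mathfrak{a})=\emptyset$ forces $\mathfrak{a}=A$.

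There is essentially no obstacle here: the entire content has been carried out in Proposition~\ref{conmap}, and the corollary is the textbook instance $\phi=\pi\colon A\twoheadrightarrow A/\mathfrak{a}$. If one wanted to be completely self-contained one could also spell out the homeomorphism explicitly: the bijection sends an ideal $\mathfrak{b}\in\mathrm{Spi}\,(A/\mathfrak{a})$ to $\pi\inv(\mathfrak{b})\in\mathcal{V}(\mathfrak{a})$, with inverse $\mathfrak{s}\mapsto \mathfrak{s}/\mathfrak{a}$ (the correspondence theorem for ideals), and both directions are continuous and closed by the computations $(\pi^*)\inv(\mathcal{V}(\mathfrak{c}))=\mathcal{V}(\langle\pi(\mathfrak{c})\rangle)$ and $\pi^*(\mathcal{V}(\mathfrak{c}'))=\mathcal{V}(\pi\inv(\mathfrak{c}'))$ already established in the proof of Proposition~\ref{conmap}. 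But the one-line deduction from part~(\ref{shcs}) is the cleanest route and the one I would write.
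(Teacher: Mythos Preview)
Your proposal is correct and matches the paper's intended route exactly: the corollary is stated without proof immediately after Proposition~\ref{conmap}, so it is meant to follow by specializing part~(\ref{shcs}) to the quotient map $\pi\colon A\to A/\mathfrak{a}$ with $\mathrm{ker}\,\pi=\mathfrak{a}$. Your additional remarks on properness of $\mathfrak{a}$ and the explicit description of the bijection are accurate but optional.
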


\begin{remark}
From Proposition  \ref{conmap}, we get the well-known result that the Zariski  spaces $\mathrm{Spec}\,A$ and $\mathrm{Spec}(A/\sqrt{\mathfrak{o}})$ are canonically homeomorphic, and  $\phi^*(\mathrm{Spec}\,A')$ is dense in $\mathrm{Spec}\,A$ if and only if $\mathrm{ker}\,\phi\subseteq \mathcal{V}(\mathfrak{o}).$	
\end{remark}

\section{Conclusion}

The generalizations like schemes, algebraic spaces of algebraic varieties still do not answer how to do algebraic geometry when polynomial equations have solutions over a $k$-algebra which is not an integral domain. Inclusion of zero divisors immediately brings the following two problems in Grothendieck's scheme theory:

\begin{enumerate}
\item \label{nzt} the algebraic sets $\{\mathcal{V}(\mathfrak{a})\}_{\mathfrak{a}\in \mathrm{Idl}\,A}$ no longer form a Zariski topology; and
	
\item \label{lor} we do not get a sheaf of local rings.
\end{enumerate}

The alternative topology ($k$-topology) proposed here handles the problem (\ref{nzt}). But we do not know how to resolve problem (\ref{lor}). Considering $\mathrm{Spi}\,A$ is not good enough to obtain a sheaf of local rings. For, if $S=A\!\!\setminus\!\! \mathfrak{s},$ for some $\mathfrak{s}\in \mathrm{Spec}\,A$, then $S$ is a multiplicatively closed set and hence we can have a local algebra $A_S$ at $S$. But if $\mathfrak{s}\in \mathrm{Spi}\,A$, then the set $S$ is no longer necessarily multiplicatively closed. Therefore, inclusion of zero divisors in varieties might require a completely different treatment to study local properties of geometric objects.

Table \ref{tabB} summarizes major results in this paper and compared them with Zariski spaces.

\begin{table}[H]
\begin{center}
\begin{tabular}{|c|c|}
\hline
\footnotesize\textbf{EGA I} & \footnotesize\textbf{Proposed idea}\\
\hline\hline 
\footnotesize Field-valued polynomial equations &\footnotesize $k$-algebra-valued polynomial equations \\
\hline
\footnotesize The set $\mathrm{Spec}\,A$ of prime ideals of $A$ &\footnotesize The set $\mathrm{Spi}\,A$ of proper ideals of $A$\\
\hline
\footnotesize $\mathcal{V}(\mathfrak{a})=\{\mathfrak{p}\in\mathrm{Spec}\,A\mid \mathfrak{a}\subseteq \mathfrak{p}\}$ &\footnotesize $\mathcal{V}(\mathfrak{a})=\{\mathfrak{s}\in\mathrm{Spi}\,A\mid \mathfrak{a}\subseteq \mathfrak{s}\}$\\
\hline
\footnotesize $\mathcal{V}(\mathfrak{a})\cup \mathcal{V}(\mathfrak{b}) = \mathcal{V}(\mathfrak{a}\cap \mathfrak{b})= \mathcal{V}(\mathfrak{ab}). $&\footnotesize $\mathcal{V}(\mathfrak{a})\cup \mathcal{V}(\mathfrak{b}) \subseteq \mathcal{V}(\mathfrak{a}\cap \mathfrak{b})\subseteq \mathcal{V}(\mathfrak{ab}). $ \\
\hline
\footnotesize $\mathcal{V}(\mathfrak{a})=\mathcal{V}(\sqrt{\mathfrak{a}})$& \footnotesize $\mathcal{V}(\mathfrak{a})\supseteq \mathcal{V}(\sqrt{\mathfrak{a}})$\\
\hline 
\footnotesize $\mathcal{IV}(\mathfrak{a})=\sqrt{\mathfrak{a}}$ &\footnotesize $\mathcal{IV}(\mathfrak{a})\supseteq \mathfrak{a},$\\
\hline
\footnotesize Zariski topology &\footnotesize $k$-topology \\\hline
\footnotesize Compact and $T_{\scriptscriptstyle 0}$&\footnotesize Compact and $T_{\scriptscriptstyle 0}$\\
\hline
\footnotesize $\{\mathcal{V}(\mathfrak{p})\mid \mathfrak{p}\in \mathrm{Spec}\,A\}$ are irreducible  &\footnotesize Non-empty subbasic closed sets are irreducible\\
\hline
\footnotesize Sober &\footnotesize Sober\\
\hline
\footnotesize Spectral&\footnotesize Spectral\\
\hline
\footnotesize $\mathrm{Spec}\,A$ is connected iff $A$ does not &\footnotesize $\mathrm{Spi}\,A$ is always connected\\\footnotesize have any non-trivial idempotent elements& \\
\hline
\end{tabular}
\end{center}
\caption{Zariski spaces vs.\,$k$-spaces}
\label{tabB}
\end{table}

\section*{Acknowledgement}
The author wishes to extend heartfelt gratitude to the anonymous referee for their thorough review and invaluable feedback, which greatly enhanced the paper's presentation.

\end{document}